\documentclass[a4paper,12pt]{article}
\usepackage{amsmath,amsthm,amsfonts,amssymb}
\lefthyphenmin=3
\tolerance=10000

\title{Compressions and Probably Intersecting Families}
\author{Paul A.~Russell
\footnote{Department
of Pure Mathematics and Mathematical Statistics,
Centre for Mathematical Sciences,
Wilberforce Road,
Cambridge CB3 0WB,
England.} 
\footnote{{\tt P.A.Russell@dpmms.cam.ac.uk}}}
\date{April 20,2011}

\newtheorem{theorem}{Theorem}
\newtheorem{lemma}{Lemma}
\newtheorem{corollary}[lemma]{Corollary}
\newtheorem{proposition}[lemma]{Proposition}

\newtheorem{problem}{Problem}
\newtheorem{question}[lemma]{Question}

\newcommand{\erdos}{Erd\H os}
\newcommand{\Wlog}{wlog}

\newcommand{\A}{{\cal A}}
\newcommand{\nr}{[n]^{(\gee r)}}
\newcommand{\nrr}{[n]^{(\gee r+1)}}
\newcommand{\size}{\sum_{j=r}^n{n\choose j}}
\newcommand{\Size}{\sum_{j=r+1}^n{n\choose j}}
\newcommand{\lee}{\leqslant}
\newcommand{\C}{{\cal C}}
\newcommand{\pow}{{\cal P}}
\newcommand{\gee}{\geqslant}
\newcommand{\ap}{\A_p}
\newcommand{\ner}{[n]^{(r)}}
\newcommand{\B}{{\cal B}}
\newcommand{\prob}{{\mathbb P}}
\newcommand{\ekr}{\erdos-Ko-Rado}
\newcommand{\II}{{\mathfrak I}}
\newcommand{\I}{{\mathfrak A}}
\newcommand{\J}{{\mathfrak C}}
\newcommand{\X}{{\cal X}}
\newcommand{\Y}{{\cal Y}}
\newcommand{\E}{{\cal E}}
\newcommand{\D}{{\cal D}}

\begin{document}

\maketitle

\begin{abstract}
A family $\A$ of sets is said to be {\it intersecting} if 
$A\cap B\ne\emptyset$ for all $A$, $B\in\A$.  It is a well-known and simple 
fact that an intersecting family of subsets of $[n]=\{1,2,\ldots\,,n\}$
can contain at most $2^{n-1}$ sets. 
Katona, Katona and Katona ask the following question.
Suppose instead $\A\subset\pow[n]$ satisfies $|\A|=2^{n-1}+i$ for some fixed
$i>0$.  Create a new family $\ap$ by choosing each member of $\A$
independently with some fixed probability $p$.  How do we choose $\A$ to
maximize the probability that $\ap$ is intersecting?
They conjecture that there is a nested sequence of optimal families for
 $i=1$, $2$, $\ldots\,$, $2^{n-1}$.  In this paper,
we show that the families $\nr=\{A\subset[n]:|A|\ge r\}$ are optimal for
the appropriate values of $i$, thereby proving the conjecture for this
sequence of values.
Moreover, we show that for intermediate values of
$i$ there exist optimal families lying between those we have found.  
It turns out
that the optimal families we find simultaneously maximize the number of
intersecting subfamilies of every possible order.

Standard compression techniques appear inadequate to solve the problem
as they do not preserve intersection properties of subfamilies.  Instead,
our main tool is a 
novel compression method, together with a way of `compressing'
subfamilies,  which may be of independent interest.
\end{abstract}

\begin{section}{Introduction}\label{intro}
Many problems of extremal combinatorics concern intersecting familes of
finite sets.  A family $\A$ is said to be {\it intersecting} if $A\cap B
\ne\emptyset$ for all $A$,~$B\in\A$.  How large an
intersecting family can we find in the discrete cube $Q_n=\pow[n]
=\pow\{1,2,\ldots\,,n\}$?  It is easy to achieve $|\A|=2^{n-1}$,
for example by taking $\A=\{A\subset Q_n:1\in A\}$.  And it is easy to
see that we can do no better than this---an intersecting family cannot
contain both a set and its complement.

A more interesting question arises if we require our intersecting family
to be {\it uniform.}  Given a set $S$ and a positive integer $r$,
write $S^{(r)}$ for the collection $\{A\subset S:|A|=r\}$ of all subsets of
$S$ of size $r$.  How large an intersecting family $\A\subset\ner$ can we
find?

As in the non-uniform case, it seems natural to try taking
\hbox{$\A=\{A\in\ner:1\in A\}$,} 
here achieveing $|\A|={n-1\choose r-1}$.  And indeed,
in their significant paper of 1964, \erdos, Ko and Rado \cite{ekr} show
that if $r\lee n/2$ we can do no better than this.  (We remark in passing
that the problem is of no interest if $r>n/2$, as then the entirety
of $\ner$ is itself intersecting.)

In this paper we shall be concerned with two related probabilistic questions
posed by Katona, Katona and Katona \cite{kkk}.  We begin with the non-uniform
case.

Recall from above that if $\A\subset\pow[n]$ is intersecting then
$|\A|\lee2^{n-1}$.  Suppose that we are instead required to choose a somewhat
larger family $\A$ and then randomly discard some of the sets in $\A$ to
form a subfamily $\B$.  How can we maximize the probability that $\B$ is
intersecting?  A precise statement of the problem is as follows.

\begin{problem}[\cite{kkk}]\label{pnon}
Let $n$ and $i$ be positive integers with $i\lee 2^{n-1}$ and let $p\in(0,1)$.
Given $\A\subset\pow[n]$, write $\ap$ for the (random) subfamily of $\A$
obtained by choosing each set in $\A$ independently with probability $p$.
How should we choose $\A$ with $|\A|=2^{n-1}+i$ to maximize
$\prob(\ap\hbox{\rm\ is intersecting})$?
\end{problem}

Katona, Katona and Katona~\cite{kkk} solve the first cases of this problem,
that is, for $i\lee {n-1\choose\lceil(n-3)/2\rceil}$.  They construct their
optimal families by taking `large' subsets of the cube.  More precisely,
for $n$ odd take all sets of size at least $(n+1)/2$ and any $i$ sets
of size $(n-1)/2$ that contain the element $1$.  Similarly, for $n$ even
take all sets of size $n/2+1$, all sets of size $n/2$ that contain the element
$1$, and any other $i$ sets of size $n/2$.  They conjecture that a
continuation of this construction gives an optimal family $\A$ for each $i$,
leading to a nested sequence 
$\A_1\subset\A_2\subset\cdots\A_{2^{n-1}}$ of optimal families
for $i=1$, $2$, $\ldots\,$, $2^{n-1}$.

In this paper, we show that the families $\nr=\{A\subset\pow[n]:|A|\ge r\}$
are optimal for the appropriate values of $i$, thereby proving the conjecture
for this sequence of values.
Moreover, we show that for
intermediate values of $i$ there exist optimal families lying between
those we have found.  Our main result is
as follows.  

\begin{theorem}\label{tnon}
Let $n$ be a positive integer and $p\in(0,1)$.  Let $r$ be a positive
integer with $r\le n/2$.  Then, over all $\A\subset\pow[n]$
with $|\A|=\size$, the probability $\prob(\ap\hbox{\rm\ is intersecting})$
is maximized by $\A=\nr$.

Moreover, suppose $i$ is any positive integer with $i\lee 2^{n-1}$ and let
$r$ be such that 
$\Size\lee2^{n-1}+i\lee\size$.  Then, over all $\A\subset\pow[n]$ with
$|A|=2^{n-1}+i$, the probability $\prob(\ap\hbox{\rm\ is intersecting})$
is maximized by some $\A$ with $\nrr\subset\A\subset\nr$.
\end{theorem}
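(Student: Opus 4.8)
The plan is to first turn the probabilistic statement into a purely combinatorial one and then attack it by compression. Write $I_k(\A)$ for the number of intersecting subfamilies of $\A$ of size $k$. Summing over which sets of $\A$ are retained,
\[
\prob(\ap\text{ is intersecting})
=\sum_{\substack{\B\subseteq\A\\ \B\text{ intersecting}}}p^{|\B|}(1-p)^{|\A|-|\B|}
=(1-p)^{|\A|}\sum_{k\gee0}I_k(\A)\Bigl(\tfrac{p}{1-p}\Bigr)^{k}.
\]
Since $|\A|$ is fixed and $p\in(0,1)$, it suffices to prove the stronger claim that $\nr$ --- and, for intermediate sizes, some $\A$ with $\nrr\subseteq\A\subseteq\nr$ --- maximises $I_k$ for \emph{every} $k$ simultaneously; this is also the ``every possible order'' assertion of the abstract. (Recall $r\lee n/2$, so that $|\nr|\gee2^{n-1}$ and the sizes in question are the interesting ones.) Henceforth I forget $p$ and work only with the $I_k$.

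Next, a swap principle. Suppose $A\in\A$, $B\notin\A$, and every member of $\A$ that meets $A$ also meets $B$; put $\A'=(\A\setminus\{A\})\cup\{B\}$. Splitting intersecting $k$-subfamilies according to whether they contain the active set: those avoiding it are literally the intersecting $k$-subfamilies of the common part $\A\setminus\{A\}=\A'\setminus\{B\}$, while an intersecting $(k-1)$-subfamily of $\A\setminus\{A\}$ can be enlarged by $A$ precisely when each of its members meets $A$ --- whence, by hypothesis, each of its members meets $B$ too, so it can be enlarged by $B$. Hence $I_k(\A')\gee I_k(\A)$ for all $k$. The hypothesis certainly holds when $A\subseteq B$; such ``monotone swaps'' strictly increase $\sum_{A\in\A}|A|$, so after finitely many of them none is possible, i.e.\ $\A$ is an up-set. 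Thus I may assume $\A$ is an up-set of the prescribed size; note that a monotone swap --- unlike the ordinary $(i,j)$-compression that the abstract warns against --- does preserve the intersection property of subfamilies, since it only enlarges sets.

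It remains to push an up-set of the right size to its target. If $|\A|=\size$ and $\A\ne\nr$, then (as $|\A|=|\nr|$) $\A$ has a minimal element $A$ with $|A|<r$, and some $B$ of size $r$ is absent; more generally, if $\A$ is not yet sandwiched between $\nrr$ and $\nr$, then either $\A$ has a minimal element of size below $r$, or $\A\subseteq\nr$ and some $(r+1)$-set is missing, in which case $\A$ has a minimal element of size exactly $r$. In each case I want to delete such a minimal $A$ and insert a suitably chosen absent $B$ with $|B|>|A|$: then $\sum_{A\in\A}|A|$ strictly increases, and alternating this move with monotone swaps must terminate --- necessarily at $\nr$ when $|\A|=\size$, and at a family between $\nrr$ and $\nr$ otherwise. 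The obstacle --- and this is exactly where the standard compressions fail --- is that in an up-set no absent $B$ can contain $A$, so the monotone swap is unavailable, and the crude hypothesis of the swap principle is genuinely false: already for $n=4$, $r=2$, with $\A$ the up-set generated by $\{1\}$ together with all $2$-sets other than $\{3,4\}$, members such as $\{1,2\}$ meet $A=\{1\}$ but miss $B=\{3,4\}$.

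To get past this I would deploy the promised compression of \emph{subfamilies}. By the same bookkeeping as in the swap principle, it suffices, for each $k$, to construct a size-preserving injection from the intersecting $(k-1)$-subfamilies of $\A\setminus\{A\}$ every member of which meets $A$ to those every member of which meets $B$. The idea is to ``compress'' each such subfamily $\mathcal J$: one applies compression-type adjustments internal to $\mathcal J$ --- designed to keep $\mathcal J$ intersecting and, crucially, to keep $\mathcal J$ inside $\A\setminus\{A\}$ (this is where up-closedness of $\A$ is used, together with neighbourhood inequalities it forces, such as $\bigl|\{C\in\A:C\cap A\ne\emptyset\}\bigr|\lee\bigl|\{C\in\A:C\cap B\ne\emptyset\}\bigr|$, and where $r\lee n/2$ and the absence of $B$ enter) --- until every member of the resulting subfamily meets $B$; reversibility of the adjustment sequence yields injectivity, and one must check all of this is uniform in $k$. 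Pinning down the precise form of this subfamily compression and verifying these properties is the main difficulty. Granting it, the deletion-and-insertion does not decrease any $I_k$, the termination argument applies, and both halves of the theorem follow: running the process from an arbitrary family of size $\size$ shows $\nr$ maximises every $I_k$, and running it from a maximiser of $\prob(\ap\text{ is intersecting})$ in the intermediate range produces an optimal family sandwiched between $\nrr$ and $\nr$.
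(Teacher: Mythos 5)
Your opening reduction---that it suffices to show $\nr$ (and, in the intermediate range, some family between $\nrr$ and $\nr$) maximizes the number $I_k$ of intersecting $k$-subfamilies for every $k$ simultaneously---is correct and matches the paper, and your monotone-swap argument reducing to up-sets is sound. But the proof stops exactly where the real work begins. The entire content of the theorem is the construction you describe in your final paragraph and then assume (``Granting it, \ldots''): a size-preserving injection showing that deleting a minimal $A$ and inserting a larger absent $B$ decreases no $I_k$. That injection is Lemma~\ref{luvf} of the paper, and nothing in your sketch supplies it. ``Apply compression-type adjustments internal to $\mathcal{J}$ until every member meets $B$, with reversibility giving injectivity'' is a restatement of the goal, not an argument: there is no canonical partner inside $\A\setminus\{A\}$ for a set $C\in\mathcal{J}$ that meets $A$ but not $B$, your own $n=4$ example shows the naive choice fails, and the neighbourhood inequality you invoke does not by itself produce a well-defined, injective, intersection-preserving map.

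Worse, the one-set-at-a-time granularity you commit to is precisely what the paper could not make work. Even for a single $UV$-compression with $|V|=2$, performed after all simpler compressions, the author is unable to show that the number of intersecting subfamilies of each order is preserved (this is explicitly left open as a question at the end of the paper), and a single swap is finer-grained still. The resolution there is to bundle several compressions into one $(U,v,f)$-compression---for instance to compare $\A$ not with $\C_{123,45}\A$ but with $\C_{123,45}\C_{125,34}\C_{134,25}\C_{145,23}\A$---and then to define the injection not subfamily-by-subfamily but on equivalence classes $\I_\X$ of subfamilies, replacing $WX$ by $vf(W)X$ exactly for $X\in\X_2^W\cup\Y^W$, where $\Y^W$ records which replacements are forced in \emph{every} member of the class; the verification that the image is intersecting then uses the pairing $f$ in an essential way. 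None of this---the bundling, the classwise definition, or the intersection check---is present or foreshadowed in your proposal, so the argument as written has an essential gap at its core step.
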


We remark that the result of Theorem~\ref{tnon} is independent of the
value of $p$.  In fact, for $2^{n-1}+i=\size$, the family $\nr$ simultaneously
maximizes the number of intersecting subfamilies of each possible order.
This result may be of independent interest.

We also consider the uniform version of the problem.

\begin{problem}[\cite{kkk}]\label{puni}
Let $n$, $r$ and $i$ be positive integers with $r\lee n/2$ and
\hbox{$i\lee{n-1\choose r}$,} and let $p\in(0,1)$.  How should we choose
$\A\subset[n]^{(r)}$ with \hbox{$|\A|={n-1\choose r-1}+i$} to maximize
$\prob(\ap\hbox{\rm\ is intersecting})$?
\end{problem}

Results on this problem seem rather harder to come by:  Katona, Katona
and Katona~\cite{kkk} solve only the first case $i=1$.  Using methods similar
to those used to prove Theorem~\ref{tnon}, we show
that, for each $i$, there is an optimal family that is left-compressed
(as explained below).  Unfortunately, our methods are not sufficient to
determine which amongst the left-compressed families of given order is
best.  

\begin{theorem}\label{tuni}
Let $n$, $r$ and $i$ be positive integers with $r\lee n/2$ and
$i\lee{n-1\choose r}$, and let $p\in(0,1)$.  Then there exists a
left-compressed family $\A\subset\ner$ with $|\A|={n-1\choose r-1}+i$
that maximizes $\prob(\ap\hbox{\rm\ is intersecting})$ over all subfamilies
of $\ner$ of order ${n-1\choose r-1}+i$.
\end{theorem}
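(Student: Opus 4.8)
The plan is to use a compression argument of the standard "shifting" flavour, but applied at the level of the probability functional rather than at the level of a single family. For $1 \le i < j \le n$, recall that the left-compression $C_{ij}$ acts on a set $A \in \ner$ by replacing $j$ by $i$ whenever $j \in A$, $i \notin A$ and $(A \setminus \{j\}) \cup \{i\} \notin \A$; a family is left-compressed if $C_{ij}(\A) = \A$ for all $i < j$. The key point I would establish is a monotonicity statement: for every $i < j$,
\begin{equation*}
\prob\bigl((C_{ij}\A)_p \text{ is intersecting}\bigr) \gee \prob\bigl(\ap \text{ is intersecting}\bigr).
\end{equation*}
Given this, one starts from any optimal $\A$ and repeatedly applies compressions $C_{ij}$ with $i<j$; since $|C_{ij}\A| = |\A|$ always, and since $\sum_{A \in \A} \sum_{a \in A} a$ strictly decreases under any compression that changes $\A$, the process terminates at a left-compressed family of the same order whose probability of being intersecting is at least that of the original $\A$, hence is again optimal. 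This is exactly the structure used to prove Theorem \ref{tnon}, and the paper's remark that standard compressions fail is about preserving intersection \emph{of subfamilies}, not about the present weaker statement; so I expect the genuinely new compression tool to be invoked only for Theorem \ref{tnon}, with Theorem \ref{tuni} following from the more classical version.

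The heart of the matter is therefore the inequality displayed above, and I would prove it by a direct coupling / pairing argument on subfamilies. Fix $i < j$ and write $\B = C_{ij}\A$. One wants to compare the number of intersecting subfamilies of $\A$ of each given size $k$ with that of $\B$ of size $k$; because each subfamily is chosen with probability $p^k(1-p)^{|\A|-k}$ and $|\A| = |\B|$, it suffices to show that for every $k$,
\begin{equation*}
\#\{\X \subset \B : |\X| = k,\ \X \text{ intersecting}\} \gee \#\{\X \subset \A : |\X| = k,\ \X \text{ intersecting}\}.
\end{equation*}
To do this I would set up an injection from the intersecting $k$-subfamilies of $\A$ to those of $\B$. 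Partition the sets of $\A$ into those fixed by $C_{ij}$ and the "swapping pairs" $\{A, A'\}$ where $A' = (A \setminus \{j\}) \cup \{i\}$, $j \in A$, $i \notin A$; the map $C_{ij}$ sends such a pair in $\A$ to a corresponding pair in $\B$. Given an intersecting $\X \subset \A$, apply $C_{ij}$ setwise to each member; if the result $C_{ij}\X$ is still intersecting and still has $k$ members, map $\X$ to it, and otherwise map $\X$ to $\X$ itself. The work is to check that this is well defined and injective: one must verify that if $C_{ij}\X$ has fewer than $k$ members (a collision $C_{ij}A = C_{ij}A'$ occurred, forcing $A' \in \A$) or if $C_{ij}\X$ fails to be intersecting, then $\X$ itself already lies in $\B$ and is intersecting, and that no two distinct $\X$'s collide. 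The non-intersecting case is handled by the classical observation that if $A, B \in \X$ are disjoint after compression but not before, then one of $A, B$ contains $j$ and neither contains $i$, and a short case analysis shows the preimage structure is controlled.

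The main obstacle, as usual with this style of argument, is the injectivity bookkeeping: the map $C_{ij}$ on families is not injective (two subfamilies can compress to the same one), so one cannot simply "push forward" the uniform measure. I expect to handle this exactly as in the proof of Theorem \ref{tnon} by restricting attention to the bipartite structure given by the swapping pairs and defining the correction carefully on the collision set, so that the overall map becomes a bijection between two explicitly matched collections. Once the per-size inequality is in hand, summing against the binomial weights $p^k(1-p)^{|\A|-k}$ gives the probability inequality, and the termination argument finishes the proof. I would also note that, unlike Theorem \ref{tnon}, this gives no further information pinning down which left-compressed family is optimal, which is why the statement is only an existence result.
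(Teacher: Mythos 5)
Your overall reduction is sound, and it matches the paper's strategy at the top level: it suffices to show that $\C_{ij}\A$ has at least as many intersecting subfamilies of each order as $\A$, and the potential $\sum_{A\in\A}\sum_{a\in A}a$ terminates the sequence of compressions. But the injection you propose at the central step is the naive one, and it fails --- the failure is exactly the obstacle the paper's introduction isolates, which you dismiss as irrelevant to this ``weaker'' statement. It is not weaker: your own reduction is precisely to the per-order count of intersecting subfamilies. Concretely, take $\A=\{13,23,24\}$ and apply $\C_{12}$. Only $24$ moves, so $\C_{12}\A=\{13,23,14\}$. The subfamily $\X=\{23,24\}$ is intersecting; compressing it setwise relative to $\A$ gives $\{23,14\}$ (the set $23$ is blocked by $13\in\A$), which is not intersecting, so your rule falls back to ``map $\X$ to itself'' --- but $\X\not\subset\C_{12}\A$, since $24$ has left the family. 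The verification you defer (``if $C_{ij}\X$ fails to be intersecting then $\X$ already lies in the compressed family and we can keep it'') is therefore false. The culprit is that $23$ was blocked from moving by a set of $\A$ that is not in $\X$.

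The repair is not injectivity bookkeeping on a collision set; it is the main new idea of the paper. One partitions $\II(\A)$ into classes $\I_\X$ according to which sets of the forms $1X$ and $2X$ (for $X\subset[3,n]$) a subfamily contains, and within a class one moves not only the sets $2X$ that move when $\A$ itself is compressed, but also certain blocked sets $2X$ with $1X\in\A$ and $1X$ absent from the subfamily --- namely those indexed by the set $\Y$ of $X$ for which $2X$ belongs to \emph{every} member of the class. Proving that the image is intersecting then requires invoking another member $\E$ of the same class as a witness, a step with no analogue in your sketch. Appealing to ``the proof of Theorem~\ref{tnon}'' is also circular here, since that proof rests on the same kind of lemma. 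As it stands, your map is not well defined, so the proof has a genuine gap at its heart.
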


Many fruitful approaches to intersection problems involve the use of
compression techniques, first introduced by \erdos\, Ko and Rado~\cite{ekr}
in the proof of their uniform intersection theorem mentioned above.  The
idea behind such techniques is that, starting from an intersecting family
$\A$, one `moves' certain sets in $\A$ to make $\A$ `nicer' in some way
whilst $\A$ retains the property of being intersecting.  The proof
of the \ekr\ theorem applies $ij$-compressions, defined as follows.

Let $i$, $j\in[n]$ with $i<j$.  If $A\in\ner$ then the 
{\it $ij$-compression of $A$} is
$$C_{ij}A=\left\{\begin{array}{cl}
(A\cup\{i\})-\{j\}&\hbox{if }j\in A,i\not\in A\\
A&\hbox{otherwise}\end{array}\right..$$
If $\A\subset\ner$, the {\it $ij$-compression of $\A$} is
$$\C_{ij}\A=\{C_{ij}A:A\in\A\}\cup\{A\in\A:C_{ij}A\in\A\}.$$
Informally, we replace $j$ by $i$ whenever we can.  We may be prevented from
replacing $j\in A$ by $i$ {\sc either} because $i$ is already in $A$
{\sc or} because $C_{ij}A$ is already in $\A$.  When we replace $j\in A$
by $i$, we say that $A$ {\it moves;} that is, $A$ moves if $j\in A$, 
$i\not\in A$ and $C_{ij}A\not\in\A$.  
We say that $A$ is {\it blocked} from moving by $C_{ij}A$ if $A\ne C_{ij}A$
and $C_{ij}A\in\A$.
A family $\A$ is {\it $ij$-compressed}
if $\A=\C_{ij}\A$.  It is {\it left-compressed} if it is $ij$-compressed
whenever $i<j$.

\erdos, Ko and Rado show that if $\A\subset\ner$ is intersecting then so
is $\C_{ij}\A$.  They also check that any $\A\subset\ner$ can be transformed
to a left-compressed family by repeated $ij$-compressions.  It hence suffices
for them to consider only left-compressed families in their proof.

It seems at first that a similar approach to Problems~\ref{pnon} 
and~\ref{puni}
of Katona, Katona and Katona cannot possibly succeed.  We know 
from~\cite{ekr} that compressing an intersecting family yields an intersecting
family.  Unfortunately, if we compress a non-intersecting family $\A$
then there may exist an intersecting subfamily of $\A$ which moves to a
non-intersecting subfamily of $\C_{ij}\A$.

Here is a simple example which illustrates the main obstacle.  Consider
applying a $12$-compression to the family $\A=\{13,23,24\}$.  Only $24$ moves,
giving $\C_{12}\A=\{13,23,14\}$.  But now $\B=\{23,24\}\subset\A$ is
intersecting and moves to $\{23,14\}$ which is not.  (What has gone wrong?
The set $23$ was blocked from moving by the set $13$ which is in $\A$
but not in $\B$.)

Nevertheless, 
we are able to show that the family $\C_{ij}\A$ has at least
as many intersecting subfamilies of each given order as does the family
$\A$.  In fact, there is a fairly natural injection $\phi$ from the collection
$\I$ of intersecting subfamilies of $\A$ to the collection $\J$ of intersecting
subfamilies of $\C_{ij}\A$.  Starting from an intersecting family $\B\in\I$,
we form the family $\phi(\B)$ by replacing appropriately chosen $B\in\B$
by $C_{ij}B$.  We must obviously choose to replace those $B\in\B$ that move
when $\A$ is compressed to $\C_{ij}\A$, as in this case $B\not\in\C_{ij}\A$.
But we also choose to replace certain $B\in\B$ that were blocked from moving
by $C_{ij}B\in\A$ but for which $C_{ij}B\not\in\B$.  The choice of which
such $B$ to replace depends both on the family $\A$ and the subfamily $\B$.
In \S\ref{left} we give the details of our construction and prove that the
resulting families $\phi(\B)$ are indeed intersecting as required.  This
will establish Theorem~\ref{tuni}.

Our launching pad for Theorem \ref{tuni} was the use of $ij$-compressions
to prove the \ekr\ theorem.  Can we find something to play a similar role
for Theorem~\ref{tnon}?  The right place to start turns out to be from a
more general compression operator first introduced by Daykin~\cite{daykin} in
his beautiful proof of the Kruskal-Katona theorem (\cite{kruskal}, 
\cite{katona}).  
These ``$UV$-compressions'' were independently discovered by Frankl and
F\"uredi~\cite{ff} in their proof of Harper's theorem.
They also turn out to be
a special case of a compression operator later
developed by Bollob\'as and Leader~\cite{bl}, who use them to prove
intersection theorems such as the \erdos-Ko-Rado theorem and Katona's 
$t$-intersecting theorem~\cite{supervision}.  This proof of the 
$t$-intersecting theorem was also found independently by Ahlswede and 
Khachatrian~\cite{ak}.

When attempting to apply these methods to Problem~\ref{pnon},
the same obstacle arises as in the proof of Theorem~\ref{tuni} and is
overcome in the same way.  However, further difficulties arise in this case.
To preserve intersection properties in the proof of the $t$-intersecting
theorem, it is necessary to carry out the $UV$-compressions in a carefully
chosen order.  But even when this is done, we are unable to show that the
number of intersecting subfamilies of each order increases whenever an 
individual $UV$-compression is applied.  
Instead, it appears that we must apply a
sequence of several $UV$-compressions together, after which there are at
least as many intersecting subfamilies of each order as before.
We shall explain this further in 
\S\ref{up}, where we prove Theorem~\ref{tnon}.

Finally, in \S\ref{end}, we make some concluding remarks and mention some
open problems.

Our notation is mostly standard.  We draw the reader's attention
to certain points.
We write $[n]$ for the set $\{1,2,\ldots\,,n\}$ and $[m,n]$ for the
set $\{m,m+1,\ldots\,,n\}$.  For any set $S$, we write $S^{(r)}$ for the
set $\{A\subset S:|A|=r\}$ of all subsets of $S$ of order $r$,
and $S^{(\gee r)}$ for the set $\{A\subset S:|A|\gee r\}$ of all subsets of
$S$ of order at least $r$.  
If $X$ and $Y$ are sets we write $X-Y$ for the set $\{x\in X:x\not\in Y\}$.
For ease of reading, we often omit set brackets
and union symbols.  Thus, for example, $123$ denotes the set $\{1,2,3\}$,
$12XY$ denotes the set $\{1,2\}\cup X\cup Y$, and $1X\cap Y$ denotes the
set $(\{1\}\cup X)\cap Y$.  If $\A$ is a family of sets, we write
$\II(\A)$ for the collection of all intersecting subfamilies of $\A$; that
is, 
$\II(\A)=\{\B\subset\A:\B\hbox{ is intersecting}\}$.
\end{section}

\begin{section}{Left-compression}\label{left}
Our aim in this section is to prove Theorem~\ref{tuni}.

Let $i,j\in[n]$ with $i<j$.  Recall from \S\ref{intro} the definition
of the {\it $ij$-compression.}
If $A\in\ner$ then the 
{\it $ij$-compression of $A$} is
$$C_{ij}A=\left\{\begin{array}{cl}
(A\cup\{i\})-\{j\}&\hbox{if }j\in A,i\not\in A\\
A&\hbox{otherwise}\end{array}\right..$$
If $\A\subset\ner$, the {\it $ij$-compression of $\A$} is
$$\C_{ij}\A=\{C_{ij}A:A\in\A\}\cup\{A\in\A:C_{ij}A\in\A\}.$$

It is easy to see that for any $\A\subset\ner$ we may obtain a left-compressed
family by applying an appropriate sequence of $ij$-compressions.  (For example,
the quantity $\sum_{A\in\A}\sum_{a\in A}a$ decreases whenever we apply
a non-trivial $ij$-compression.)  So it suffices to prove that if
$\C=\C_{ij}\A$ then $\prob(\C_p\hbox{ is intersecting})\gee
\prob(\A_p\hbox{ is intersecting})$.  This will follow immediately from the
following lemma which is the heart of the proof.

\begin{lemma}\label{lij}
Let $\A\subset\ner$, let $i$, $j\in[n]$ and let $\C=\C_{ij}\A$.
Then there exists an injection $\phi\colon\II(\A)\to\II(\C)$ such that
$|\phi(\B)|=|\B|$ for all $\B\in\II(\A)$.
\end{lemma}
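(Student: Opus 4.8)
The plan is to define $\phi$ explicitly by specifying, for each intersecting $\B\in\II(\A)$, a subfamily $D(\B)\subset\B$ of sets to be replaced by their $ij$-compressions, and then setting $\phi(\B)=\{C_{ij}B:B\in D(\B)\}\cup(\B-D(\B))$. The family $D(\B)$ must contain every $B\in\B$ that moves when $\A$ is compressed (since such $B\notin\C$), so the real content is deciding which of the \emph{blocked} sets to move --- that is, which $B\in\B$ with $j\in B$, $i\notin B$ and $C_{ij}B\in\A-\B$ should nonetheless be replaced. The example $\A=\{13,23,24\}$, $\B=\{23,24\}$ shows that we cannot simply move all sets that ``can'' move: moving $24$ alone fails because $23$ is blocked by $13\notin\B$. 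So the choice of $D(\B)$ must be coupled: roughly, $B$ should be moved precisely when its blocker $C_{ij}B$ is itself in $\B$, \emph{or} $C_{ij}B$ would itself be moved out of the way. I would make this precise by a fixed-point or iterative definition: start with $D_0=\{B\in\B:B\text{ moves}\}$, and repeatedly add to $D_k$ any blocked $B\in\B$ whose blocker $C_{ij}B$ lies in $\B$ but has already been placed in $D_k$ (so it is vacating the slot), until the process stabilises at $D(\B)$. Equivalently one can phrase this in terms of ``$j$-chains'' $B_0,B_1,\dots$ where each $B_{t+1}=C_{ij}B_t$, following the chain down until it exits $\B$ or hits a set not blocked, and moving a tail of the chain.

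With $D(\B)$ so defined I would verify three things. First, $\phi(\B)\subset\C$: sets in $D(\B)$ map to $C_{ij}B$, which lies in $\C$ by definition of the compression; sets in $\B-D(\B)$ that do not move are already in $\C$; and I must check no $B\in\B-D(\B)$ actually moves --- this holds because every moving set was put into $D_0\subset D(\B)$. Second, $|\phi(\B)|=|\B|$: I need that the sets $C_{ij}B$ for $B\in D(\B)$ are distinct from each other and from the unchanged sets of $\B-D(\B)$. Distinctness among the $C_{ij}B$ is automatic ($C_{ij}$ is injective on sets containing $j$ and not $i$). The potential clash is $C_{ij}B=B'$ for some $B\in D(\B)$ and $B'\in\B-D(\B)$; but then $B'$ would be the blocker of $B$ inside $\B$, and the iterative rule would have forced $B'\in D(\B)$ as well (since $B$ is vacating), a contradiction. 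Third, and most importantly, $\phi(\B)$ is intersecting.

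The intersecting property is the main obstacle. Take $X,Y\in\phi(\B)$; I must show $X\cap Y\ne\emptyset$. If neither was moved this is immediate from $\B\in\II(\A)$. The dangerous case is $X=C_{ij}B$ with $B\in D(\B)$: then $X=(B-\{j\})\cup\{i\}$, so if the original intersection $B\cap Y$ was witnessed only by the element $j$, we may have destroyed it. The standard \erdos--Ko--Rado-style argument handles this when $Y$ was \emph{also} moved (then $Y=C_{ij}Y'$ with $i\in Y$, so $i\in X\cap Y$); the subtle case is $Y\in\B-D(\B)$ unmoved with $B\cap Y=\{j\}$. Here I would argue that $C_{ij}B\cap Y=\emptyset$ forces $i\notin Y$ and $j\notin Y$... actually $j\in B\cap Y$ so $j\in Y$; wait, then $j\in C_{ij}B\cap Y$ is false since $j\notin C_{ij}B$ --- so indeed the intersection could be lost, and I must show this situation cannot arise. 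The key claim will be: if $B\in D(\B)$, $Y\in\B$, and $B\cap Y=\{j\}$, then $C_{ij}Y\in\B$ and $C_{ij}Y\ne Y$, i.e. $Y$ is blocked by a set in $\B$, hence (chasing the chain) $Y\in D(\B)$ too --- contradiction with $Y\in\B-D(\B)$. To prove this claim one uses that $C_{ij}B$ and $C_{ij}Y$ would then be disjoint, so $C_{ij}Y\ne C_{ij}B$ and $C_{ij}Y$ is a genuine set of $\A$; if $C_{ij}Y\notin\B$ then $B$ and $C_{ij}Y$ are disjoint members of... no, $\{C_{ij}Y, B\}$ need not be a subfamily issue --- instead I use that $\B\cup\{C_{ij}B,C_{ij}Y\}$ minus the moved sets must stay intersecting, forcing the chain structure. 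I expect the cleanest route is to set up a careful case analysis on whether $i,j\in B$ and $i,j\in Y$ and which of $B,Y$ lie in $D(\B)$, using the iterative/fixed-point definition of $D(\B)$ to close off each bad case, exactly as in the proof sketched in \S\ref{intro}. The injectivity of $\phi$ then follows because from $\phi(\B)$ one can reconstruct $D(\B)$ (the sets containing $i$ but ``wanting'' $j$, in the appropriate chain sense) and hence recover $\B$.
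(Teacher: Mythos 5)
There is a genuine gap: your iterative definition of $D(\B)$ is vacuous. If $B$ is blocked, its blocker $C_{ij}B$ contains $i$ and omits $j$, so it is a fixed point of $C_{ij}$: it never ``moves'' and can never be ``vacating the slot''. Hence your iteration adds nothing to $D_0$, and $D(\B)$ is exactly the set of $B\in\B$ that move when $\A$ is compressed --- precisely the naive choice you yourself observe must fail. On your own example $\A=\{13,23,24\}$, $\B=\{23,24\}$: the set $23$ is blocked by $13\notin\B$, its blocker is neither in $\B$ nor being moved, so $23\notin D(\B)$ and $\phi(\B)=\{23,14\}$ is not intersecting. For the same reason the ``key claim'' of your third paragraph is false there: $B=24\in D(\B)$ and $Y=23\in\B$ meet only in $j=2$, yet $C_{ij}Y=13\notin\B$. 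The same example also undermines the injectivity argument, which leans on the same chain structure.

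The missing idea is that the decision whether to move a blocked set cannot be made by chasing chains inside $\B$; it must depend on which \emph{other} intersecting subfamilies of $\A$ exist. The paper partitions $\II(\A)$ into classes according to a ``profile'' recording, for each $X\subset[n]-\{i,j\}$ with both $iX,jX\in\A$, whether $\B$ contains neither, exactly one, or both of them (and recording $\B$ exactly elsewhere). Within a class, a blocked $jX$ (with $iX\in\A-\B$) is replaced by $iX$ if and only if \emph{every} intersecting family in the class contains $jX$ rather than $iX$; when some family $\E$ in the class contains $iX$ instead, that $\E$ is the witness used to derive a contradiction in the verification that $\phi(\B)$ is intersecting. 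Injectivity then comes for free, since the replacement is identical for all members of a class and distinct classes map into distinct classes. In your example this criterion does force $23$ to be replaced (the only intersecting family with that profile is $\{23,24\}$ itself), giving $\phi(\B)=\{13,14\}$. Without some such global, class-based criterion your construction cannot be completed, so the proposal as it stands does not prove the lemma.
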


\begin{proof}
Assume \Wlog\ $i=1$ and $j=2$.  Write $\I=\II(\A)$ and $\J=\II(\C)$.  Let
\begin{eqnarray*}
\A_1&=&\{X\subset[3,n]:1X\in\A,2X\not\in\A\}\\
\A_2&=&\{X\subset[3,n]:1X\not\in\A,2X\in\A\}\\
\A_{12}&=&\{X\subset[3,n]:1X,2X\in\A\}\\
\A_0&=&\{X\in\A:1,2\in X\hbox{ or }1,2\not\in X\}.
\end{eqnarray*}
Observe that $\A$ may be written as the disjoint union
$$\A=\{1X:X\in\A_1\cup\A_{12}\}\cup\{2X:X\in\A_2\cup\A_{12}\}\cup\A_0.$$
We make similar definitions and a similar observation for the family $\C$.
We have $\C_1=\A_1\cup\A_2$, $\C_2=\emptyset$, $\C_{12}=\A_{12}$ and
$\C_0=\A_0$.

Suppose $\X=(\X_1,\X_2,\X_{12,(0)},\X_{12,(1)},\X_{12,(2)},\X_0)$
where $\X_1\subset\A_1$, $\X_2\subset\A_2$, $\X_0\subset\A_0$ and
$\X_{12,(0)}$, $\X_{12,(1)}$, $\X_{12,(2)}$ form a disjoint partition
of $\A_{12}$.  Let $\I_\X\subset\I$ be the collection of intersecting
families $\B\subset\A$ satisfying the following conditions:
\newcounter{bean}
\begin{list}{(\roman{bean})}{\usecounter{bean}}
\item for $X\in\A_1$, $1X\in\B\iff X\in\X_1$;
\item for $X\in\A_2$, $2X\in\B\iff X\in\X_2$;
\item for $X\in\A_0$, $X\in\B\iff X\in\X_0$;
\item for $X\in\A_{12}$: 
\begin{itemize}
\item if $X\in\X_{12,(0)}$ then $1X$, $2X\not\in\B$;
\item if $X\in\X_{12,(1)}$ then $1X\in\B$ or $2X\in\B$ but not both;
\item if $X\in\X_{12,(2)}$ then $1X$, $2X\in\B$.
\end{itemize}
\end{list}
Let $\J_\X\subset\J$ be the collection of intersecting families $\B\subset\C$
satisfying condtions (i), (iii) and (iv), and the additional condition:
\begin{list}{(ii)'}{\usecounter{bean}}
\item for $X\in\A_2$, $1X\in\B\iff X\in\X_2$.
\end{list}

Observe that $\I$ and $\J$ can be written as disjoint unions
$\I=\bigcup_\X\I_\X$ and $\J=\bigcup_\X\J_\X$, the union in each case
ranging over all permissible values of $\X$.  Moreover, for each $\X$
there is a positive integer $m$ such that $|\B|=m$ for every
$\B\in\I_\X\cup\J_\X$.  Hence it suffices to construct, for each $\X$,
an injection $\phi_\X\colon\I_\X\to\J_\X$.

So fix $\X$.  Let 
$$\Y=\{X\in\X_{12,(1)}:2X\in\B\hbox{ for all }\B\in\I_\X\}.$$  Define
$\phi_\X\colon\I_\X\to\J_\X$ by
$$\phi_\X(\B)=\big(\B\cup\{1X:X\in\X_2\cup\Y\}\big)-\{2X:X\in\X_2\cup\Y\}.$$
In order to check that $\phi_\X$ is well-defined, we must check that
$\phi_\X(\B)$ is intersecting for each $\B\in\I_\X$.  It will then be
clear that $\phi_\X$ is an injection from $\I_\X$ to $\J_\X$.

Assume for a contradiction that $\B\in\I_\X$ but that $\D=\phi_\X(\B)$
is not intersecting.  So there are sets $A$, $B\in\D$ with $A\cap B=\emptyset$.
As $\B$ is intersecting, we cannot have both $A$, $B\in\B$, so assume
\Wlog\ $A\not\in\B$.  Then $A=1X$ for some $X\in\X_2\cup\Y$ and $2X\in\B$.
Now we must have $B\in\B$ (as otherwise we would have $1\in B$).  So
$B\cap 2X\ne\emptyset$ but $B\cap 1X=\emptyset$.  Hence $B=2Y$ for some
$Y\subset[3,n]$ with $X\cap Y=\emptyset$.  We cannot have $1Y\in\B$
(as $1Y\cap 2X=\emptyset$) so $Y\in\X_2\cup\X_{12,(1)}$.  But,
as $2Y\in\D$, we have $Y\not\in\X_2\cup\Y$.  So $Y\in\X_{12,(1)}-\Y$;
that is, there is some $\E\in\I_\X$ with $1Y\in\E$.  But $X\in\X_2\cup\Y$
and so $2X\in\E$.  But $1Y\cap 2X=\emptyset$, a contradiction (as
$\E$ is intersecting).

Thus $\phi_\X$ is an injection from $\I_\X$ to $\J_\X$ for each $\X$.
Putting together all of the $\phi_\X$, we obtain the required injection
$\phi\colon\II(\A)\to\II(\C)$.
\end{proof}

We immediately obtain the main result of this section.

\setcounter{theorem}{1}
\begin{theorem}
Let $n$, $r$ and $i$ be positive integers with $r\lee n/2$ and
$i\lee{n-1\choose r}$, and let $p\in(0,1)$.  Then there exists a
left-compressed family $\A\subset\ner$ with $|\A|={n-1\choose r-1}+i$
that maximizes $\prob(\ap\hbox{ is intersecting})$ over all subfamilies
of $\ner$ of order ${n-1\choose r-1}+i$.
\end{theorem}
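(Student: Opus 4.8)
The plan is to deduce the theorem from Lemma~\ref{lij} by the standard compression argument. The first step is to record the elementary identity
$$\prob(\ap\hbox{ is intersecting})=\sum_{\B\in\II(\A)}p^{|\B|}(1-p)^{|\A|-|\B|},$$
which holds because $\ap$ takes each value $\B\subset\A$ with probability $p^{|\B|}(1-p)^{|\A|-|\B|}$ and is intersecting precisely when $\ap\in\II(\A)$. Hence $\prob(\ap\hbox{ is intersecting})$ depends on $\A$ only through $|\A|$ and the numbers $a_m(\A)=|\{\B\in\II(\A):|\B|=m\}|$ of intersecting subfamilies of each order $m$; in particular, if $\A$ and $\A'$ have the same order and $a_m(\A')\gee a_m(\A)$ for every $m$, then $\prob(\A'_p\hbox{ is intersecting})\gee\prob(\ap\hbox{ is intersecting})$, and this holds for every $p\in(0,1)$ at once.

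Next I would observe that an $ij$-compression preserves the order of a family: $\C_{ij}\A$ is obtained from $\A$ by replacing each $A\in\A$ that moves (that is, with $j\in A$, $i\notin A$, $C_{ij}A\notin\A$) by the set $C_{ij}A$, and leaving every other member of $\A$ unchanged, so $|\C_{ij}\A|=|\A|$. Feeding this into Lemma~\ref{lij}, which supplies an injection $\phi\colon\II(\A)\to\II(\C_{ij}\A)$ with $|\phi(\B)|=|\B|$, we get $a_m(\C_{ij}\A)\gee a_m(\A)$ for all $m$, and therefore, by the previous paragraph, $\prob((\C_{ij}\A)_p\hbox{ is intersecting})\gee\prob(\ap\hbox{ is intersecting})$.

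Finally I would run the compression to completion. Since there are only finitely many families $\A\subset\ner$ with $|\A|={n-1\choose r-1}+i$, choose among them a family $\A^*$ maximizing $\prob(\ap\hbox{ is intersecting})$. Starting from $\A^*$, repeatedly apply non-trivial $ij$-compressions with $i<j$; as $\sum_{A\in\A}\sum_{a\in A}a$ strictly decreases at each such step and is bounded below, the process halts at a family $\A$ which is $ij$-compressed for every $i<j$, i.e.\ left-compressed. Iterating the inequality of the previous paragraph gives $\prob(\ap\hbox{ is intersecting})\gee\prob(\A^*_p\hbox{ is intersecting})$, while $|\A|=|\A^*|={n-1\choose r-1}+i$; maximality of $\A^*$ forces equality, so $\A$ is a left-compressed family of the required order that maximizes $\prob(\ap\hbox{ is intersecting})$.

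There is no substantial obstacle remaining at this stage: all the difficulty is already concentrated in Lemma~\ref{lij}, whose construction of the order-preserving injection $\phi$ is the genuinely delicate point — in particular the definition of the set $\Y$ of those $X\in\X_{12,(1)}$ forced to sit ``on the $2$ side'' in every member of $\I_\X$, and the verification that $\phi_\X(\B)$ remains intersecting. Beyond that, the only things needing any checking are the two routine facts used above, namely that an $ij$-compression preserves the order of a family and that iterated $ij$-compression with $i<j$ terminates in a left-compressed family.
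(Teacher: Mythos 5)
Your proposal is correct and follows essentially the same route as the paper: reduce to Lemma~\ref{lij} via the observation that the probability is a function of the order-counts of intersecting subfamilies, take a maximizing family, and compress it to a left-compressed one using the decreasing weight $\sum_{A\in\A}\sum_{a\in A}a$ to guarantee termination. The only difference is that you spell out the probability identity and the fact that $|\C_{ij}\A|=|\A|$, which the paper leaves implicit.
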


\begin{proof}
Let $\A\subset\ner$ be a family of order $k$ maximizing $\prob(\A_p\hbox{ is
intersecting})$ over all families of order $k$.  Carry out a sequence
of $ij$-compressions $\C_{i_1j_1}$, $\C_{i_2j_2}$, $\ldots\,$, $\C_{i_mj_m}$
to obtain families $\A_1=\C_{i_1j_1}\A$, $\A_2=\C_{i_2j_2}\A_1$, 
$\ldots\,$, \hbox{$\A_m=\C_{i_mj_m}\A_{m-1}$,} with $\A_m$ left-compressed.

It follows from Lemma~\ref{lij} that, for any family $\B$ and any $i$, $j$,
we have $\prob((\C_{ij}\B)_p\hbox{ is intersecting})\gee\prob(\B_p\hbox{ is
intersecting})$.  Hence, by induction, $\prob((\A_m)_p\hbox{ is intersecting})
\gee\prob(\A_p\hbox{ is intersecting})$.  But the family
$\A$ was chosen to maximize this
probability, so in fact we have that  
\hbox{$\prob((\A_m)_p\hbox{ is intersecting})=\prob
(\A_p\hbox{ is intersecting})$} and $\A_m$ is our required
left-compressed family.
\end{proof}

\end{section}

\begin{section}{Main result}\label{up}
We now turn to the proof of our main result, Theorem~\ref{tnon}.  As we
remarked in \S\ref{intro}, we begin from a certain proof of Katona's
$t$-intersecting theorem using $UV$-compressions.  In \S\ref{UV} we
define $UV$-compressions, briefly outline this proof of the $t$-intersecting
theorem, and explain where
the difficulties lie in translating these methods to solve Problem~\ref{pnon}.
In \S\ref{Uvf} we define our new compression operators.  Finally, in
\S\ref{main} we prove Theorem~\ref{tnon}.

\begin{subsection}{Background}\label{UV}
Let $n$ be a positive integer and let $U$, $V\subset[n]$ be disjoint.
If $A\subset[n]$ then the {\it $UV$-compression of $A$} is
$$C_{UV}A=\left\{\begin{array}{cl}
(A\cup U)-V&\hbox{if }V\subset A, U\cap A=\emptyset\\
A&\hbox{otherwise}\end{array}\right..$$
If $\A\subset\pow[n]$, the {\it $UV$-compression of $\A$} is
$$\C_{UV}\A=\{C_{UV}A:A\in\A\}\cup\{A\in\A:C_{UV}A\in\A\}.$$
As with $ij$-compressions, it is generally helpful to think of the compression
`moving' certain sets by replacing $V$ with $U$ where possible.  Indeed,
$ij$-compressions are simply the special case of $UV$-compressions where
$U$ and $V$ are both singleton sets.

Again as with $ij$-compressions, a typical application aims to compress
an initial family to make it `nicer' in some way whilst preserving some
property of the family.  However, one must often take great care
over the order in which the compressions are applied.

A well known example is the $t$-intersecting theorem.  A family
$\A\subset\pow[n]$ is said to be {\it $t$-intersecting} if
$|A\cap B|\gee t$ for all $A$, $B\in\A$.  How large can such a family be?

Assume for simplicity that $n+t$ is even.  One obvious example is to take
$\A=[n]^{\left(\gee{n+t\over 2}\right)}$.  
Katona~\cite{tint} showed that this was
best possible.  We sketch a later proof based on 
$UV$-compressions.

The proof begins with a $t$-intersecting family $\A$ and aims to transform
it into a family $\B$ with $\nrr\subset\B\subset\nr$.  This can be done
by a sequence of $UV$-compressions with $|V|<|U|$ in each case.  (In fact,
we need only use $UV$ compressions with $|U|=|V|+1$.)  If the
resulting family $\B$ is $t$-intersecting then the theorem is proved.

Unfortunately, this need not always be the case:  the family
$\A=\{45,46\}$ is $1$-intersecting but $\C_{123,45}\A=\{123,46\}$
is not.  However, this problem can be resolved by carrying out the simplest
available compression at each stage---here $\A$ is not $(12,4)$-compressed,
and $\C_{12,4}\A=\{125,126\}$ {\it is} $1$-intersecting. (To be precise,
it is now easy to check that 
if $\A$ is $t$-intersecting and $U'V'$-compressed for all $U'\subset U$
and $V'\subset V$ with $|U'|>|V'|$ and $(U',V')\ne(U,V)$ then 
$\C_{UV}\A$ is $t$-intersecting.)
This suffices to prove the $t$-intersecting theorem.

We now consider how this can be applied to Problem~\ref{pnon}.  We begin
with a family $\A\subset\pow[n]$ which we aim to compress to a family
$\A'$ with \hbox{$\nrr\subset\A'\subset\nr$}
 by means of $UV$-compressions with
$|U|=|V|+1$.  Our initial hope might be that if these compressions are
applied in an appropriate order then, as with $ij$-compressions in 
\S\ref{left}, the number of intersecting subfamilies of each possible order
increases after each compression.

We may clearly apply a $UV$-compression with $|V|=0$---each intersecting
subfamily of $\A$ moves to an intersecting subfamily of $\C_{UV}\A$.

If $|V|=1$ then, as with $ij$-compressions, it is possible for an
intersecting subfamily of $\A$ to move to a non-intersecting
subfamily of $\C_{UV}\A$.  But this problem can be resolved precisely
as it was for $ij$-compressions in the proof of Lemma~\ref{lij}.

The real problem first arises when $|V|=2$.
Now we are unable to show that $\C_{UV}\A$ contains more intersecting
subfamilies of each order than does $\A$, even if we assume that we have
already performed all simpler compressions (although we do not have a
counterexample).

Why does the proof of Lemma~\ref{lij} not carry over?  
Suppose, say, we perform the compression $\C_{123,45}$ on $\A$, and
$\B\subset\A$ is intersecting.  Perhaps when forming $\phi(\B)$ we replace
$45\in\B$ with $123$.  Now, if also $4\in\B$ then $4$ does not move but
$4\cap123=\emptyset$.  However, we know that $\A$ is $(12,4)$-compressed
so maybe we can replace $4$ with $12$.  But what if, say, $34\in\B$?  Now
$34$ does not intersect $12$ \ldots

At some point in the proof, it appears that we need to perform an
illegal replacement, say $34\to125$.  And we cannot assume that $\A$ is
$(125,34)$-compressed as then we do not obtain a well-founded order in which
to carry out our compressions.

The solution is to perform four compressions together---instead of comparing
$\A$ with $\C_{123,45}\A$, we compare it with 
$\C=\C_{123,45}\C_{125,34}\C_{134,25}\C_{145,23}\A$.  It is now possible
to arrange that all of the necessary replacements are legal, yielding a proof
that $\C$ contains at least as many intersecting subfamilies of each possible
order as does $\A$.
\end{subsection}

\begin{subsection}{$(U,v,f)$-compressions}\label{Uvf}
It is convenient to define a new compression operator which carries out
all of the necessary compressions simultaneously.  In fact, it moves sets
in such a way that we no longer need to worry about carrying out simpler
compressions first.

Let $X$ be a set.  A {\it pairing function} on $X$ is a function
$f\colon X\to X$ such that $f\circ f$ is the identity and $f$ has no fixed
point.  We may think of $f$ as `pairing' the elements of $X$.  Note that
if $X$ is finite then it must have even order.

Let $U\subset[n]$ be of even order, $v\in[n]-U$ and $f\colon U\to U$ be
a pairing function.  We define the $(U,v,f)$-compression on $\pow[n]$
by
$$C_{U,v,f}(A)=\left\{\begin{array}{cl}
A&\hbox{if }v\in A\\
f(A\cap U)\cup\{v\}\cup(A-U)&\hbox{if }v\not\in A
\end{array}\right.$$
for $A\in\pow[n]$
and
$$\C_{U,v,f}(\A)=\{C_{U,v,f}(A):A\in\A\}\cup\{A\in\A:C_{U,v,f}(A)\in\A\}$$
for $\A\subset\pow[n]$.

We remark that in the case where $\A$ is already $U'V'$-compressed for all 
disjoint pairs $(U',V')$ with
$V'\subset U$, $U'\subset U\cup\{v\}$, \hbox{$|V'|<|U|/2$,} 
\hbox{$|U'|\lee|U|/2+1$}
and $|U'|>|V'|$ then $\C_{U,v,f}$ can be written as a
composition of $UV$ compressions.  Indeed, in this case
\hbox{$C_{U,v,f}=C_{U_1V_1}C_{U_2V_2}\cdots C_{U_kV_k}$} where $V_1$, $V_2$,
$\ldots\,$, $V_k$ are the subsets of $U$ of order $|U|/2$ and 
\hbox{$U_i=(U-V_i)\cup\{v\}$.}

As an illustration, we prove that
$(U,v,f)$-compressions preserve the property of a family being intersecting.

\begin{proposition}
Let $\A\subset\pow[n]$ be intersecting, let $U\subset[n]$ be of even order,
let $v\in[n]-U$ and let $f\colon U\to U$ be a pairing function.
Then $\C_{U,v,f}\A$ is intersecting.  
\end{proposition}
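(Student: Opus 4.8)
The plan is to show directly that any two sets in $\C_{U,v,f}\A$ intersect, by case analysis on whether each set has moved or was blocked. Write $\C=\C_{U,v,f}\A$ and take $A'$, $B'\in\C$ with $A'\cap B'=\emptyset$; we seek a contradiction. Each of $A'$, $B'$ is either a set of $\A$ that was blocked (so lies in $\A$) or is $C_{U,v,f}(A)$ for some $A\in\A$ that moved. Since $\A$ is intersecting, we cannot have both $A'$, $B'\in\A$, so at least one of them, say $A'$, is the image of a set that genuinely moved: $A'=C_{U,v,f}(A)=f(A\cap U)\cup\{v\}\cup(A-U)$ with $v\notin A$ and $A'\notin\A$. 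In particular $v\in A'$, so $v\notin B'$, so $B'$ also has the form $C_{U,v,f}(B)$ (if $B'\in\A$ then since $v\notin B'$ we still get $C_{U,v,f}(B')\ne B'$; the key point is $v\notin B'$ forces $B'$ to look like a compressed set).

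First I would dispose of the parts outside $U\cup\{v\}$: since $A'\cap B'=\emptyset$ and $A-U=A'-(U\cup\{v\})$, $B-U=B'-(U\cup\{v\})$ (using $v\notin A,B$ in the relevant cases), we get $(A-U)\cap(B-U)=\emptyset$, hence $(A\cap B)\subset U$. So $A$ and $B$ intersect only inside $U$, say $u\in A\cap B\cap U$. Then $f(u)\in f(A\cap U)$ and $f(u)\in f(B\cap U)$, so $f(u)\in A'\cap B'$ — unless the pairing "escapes", i.e.\ one of $A$, $B$ didn't actually move and so its $U$-part wasn't relabelled by $f$. This is exactly the subtlety. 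So the real work is to handle the case where (say) $B'=B\in\A$ was blocked. Here I claim $B'=B$ still intersects $A'$: since $v\notin B$, the set $C_{U,v,f}(B)=f(B\cap U)\cup\{v\}\cup(B-U)$ lies in $\A$ (that's what "blocked" means). Now $A\in\A$ and $C_{U,v,f}(B)\in\A$, and $\A$ is intersecting, so $A\cap C_{U,v,f}(B)\ne\emptyset$; any common element is either in $A-U$ meeting $B-U$ (giving a common element of $A'$ and $B'$, contradiction), or equal to $v$ (impossible, $v\notin A$), or lies in $A\cap f(B\cap U)$, i.e.\ some $f(b)\in A$ with $b\in B\cap U$. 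Then I want to produce a common element of $A'=f(A\cap U)\cup\{v\}\cup(A-U)$ and $B'=B$. We have $f(b)\in A\cap U$, so $f(f(b))=b\in f(A\cap U)\subset A'$, and $b\in B=B'$: contradiction.

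The remaining subcase is that $A$, $B$ both genuinely moved but the naive argument above needs $u\in A\cap B\cap U$; I gave that already. And the subcase where $A$ moved and $B'=B$ was blocked is the one just treated. If instead both $A'$ and $B'$ are images of moved sets, then $f(u)\in A'\cap B'$ as noted, a contradiction. So in every case we reach a contradiction, proving $\C_{U,v,f}\A$ is intersecting.

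The step I expect to be the main obstacle is the "blocked" case: when $B'=B\in\A$ has not moved, its $U$-coordinates are \emph{not} relabelled by $f$, so a common element $u\in A\cap B\cap U$ need not survive into $A'\cap B'$. The device that saves it is to apply the intersecting property of $\A$ not to $A$ and $B$ directly but to $A$ and the \emph{would-be image} $C_{U,v,f}(B)\in\A$ (which exists precisely because $B$ was blocked), and then translate a common element of those two sets across the involution $f$ — using $f\circ f=\mathrm{id}$ — back into an honest common element of $A'$ and $B'$. Getting the bookkeeping of $f$ versus $f^{-1}=f$ right, and checking that the element found actually lies in the correct one of the two parts $f(A\cap U)$ and $A-U$, is where care is needed; everything else is routine. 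It is worth noting that no hypothesis on $f$ beyond its being a pairing function is used, and no ordering of compressions is needed here — that subtlety only bites when counting intersecting subfamilies, not when merely preserving the intersecting property.
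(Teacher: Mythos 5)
Your proof is correct and follows essentially the same route as the paper's: in the key case you pass from the blocked set $B'$ to its would-be image $C_{U,v,f}(B')\in\A$, apply the intersecting property of $\A$ to that set and to the preimage $A$ of $A'$, and transport the common element back via $f\circ f=\mathrm{id}$, which is exactly the paper's comparison of $WX$ with $vf(T)Y$ (stated there in contrapositive form as a disjointness computation). The only blemish is the throwaway claim that $v\notin B'$ forces $B'$ to ``have the form $C_{U,v,f}(B)$'' --- in fact it forces $B'\in\A$ with $C_{U,v,f}(B')\in\A$, and your ``both moved'' case is vacuous since both images would then contain $v$ --- but neither point affects the validity of the argument.
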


\begin{proof}
Write $\C=\C_{U,v,f}\A$.  Suppose that $\C$ is not intersecting.  Choose
$A$, $B\in\C$ with $A\cap B=\emptyset$.
As $\A$ is intersecting, we cannot
have both $A$, $B\in\A$, so assume \Wlog\ that $A\not\in\A$.  Then 
$A=vf(W)X$ for some $W\subset U$ and $X\subset [n]-(U\cup\{v\})$ with
$WX\in\A$.  As $A\cap B=\emptyset$, we must have $v\not\in B$ and thus
$B\in\A$ and $B=TY$ for some $T\subset U$ and $Y\subset [n]-(U\cup\{v\})$.
Moreover, $T\cap f(W)=\emptyset$ and $X\cap Y=\emptyset$.  Now,
as $v\not\in B$ and $B\in\C$ we must have $vf(T)Y\in\A$.  Now consider
$WX$, $vf(T)Y\in\A$.  We have $W\cap f(T)=f(f(W)\cap T)=\emptyset$ and 
$X\cap Y=\emptyset$, and so $WX\cap vf(T)Y=\emptyset$.  But this is a
contradiction, as $\A$ is intersecting.
\end{proof}

Note that, unlike with standard $UV$-compressions, 
there is no restriction here on the order in which these
compressions may be applied.  However, we remark in passing that the order
{\it would} be important if we wanted to retain the property of $\A$
being $2$-intersecting; in this case we would again have to apply
compressions with smaller $U$ first.  For example, taking $\A=\{23,1236\}$,
$U=\{2345\}$, $v=1$, $f(2,3,4,5)=(4,5,2,3)$, we have $\A$ $2$-intersecting
but $C_{U,v,f}\A=\{145,1236\}$ not $2$-intersecting.
\end{subsection}

\begin{subsection}{Proof of main result}\label{main}
The heart of the proof is the following lemma.  The proof of the lemma
mirrors the proof of Lemma~\ref{lij}, but using our $(U,v,f)$-compressions
in place of $ij$-compressions.

\begin{lemma}\label{luvf}
Let $\A\subset\pow[n]$, let $U$, $v$ and $f$ be as above and let 
$\C=\C_{U,v,f}(\A)$.  Then there exists an injection 
$\phi\colon\II(\A)\to\II(\C)$ such that
$|\phi(\B)|=|\B|$ for all $\B\in\II(\A)$.
\end{lemma}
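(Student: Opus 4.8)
The plan is to mirror the proof of Lemma~\ref{lij} as closely as possible, using $(U,v,f)$-compressions in place of $ij$-compressions. First I would stratify the source family $\A$ according to how each set meets $U\cup\{v\}$: for each $W\subset U$ write the ``$W$-slice'' of $\A$ (those $A\in\A$ with $v\notin A$ and $A\cap U=W$) and the corresponding ``$vf(W)$-slice''; sets containing $v$ whose $U$-part is not of the form $f(W)$ for a compressible $W$, and sets already fixed by $C_{U,v,f}$, go into an untouched part $\A_0$. The pairing function $f$ acts on the index set of slices, and for each $f$-orbit $\{W,f(W)\}$ of slices the compression behaves exactly like the two-element-block situation of Lemma~\ref{lij}: a set in the $W$-slice either moves to the corresponding $vf(W)$-slice, stays put because the target is already present, or stays put because $v\in A$ (in the $vf(W)$ direction). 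So I would introduce a partition parameter $\X$ recording, for each orbit, which sets of the ``free'' slices lie in $\B$, and, for the ``blocked'' slices (where both $WX$ and $vf(W)X$ are in $\A$), a three-way split $\X_{\bullet,(0)}$, $\X_{\bullet,(1)}$, $\X_{\bullet,(2)}$ according to whether $\B$ contains neither, exactly one, or both of $WX$ and $vf(W)X$. As in Lemma~\ref{lij}, $\II(\A)$ and $\II(\C)$ decompose as disjoint unions $\bigcup_\X\I_\X$ and $\bigcup_\X\J_\X$ with all members of $\I_\X\cup\J_\X$ of a common order, so it suffices to produce an injection $\phi_\X\colon\I_\X\to\J_\X$ for each $\X$.

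Next I would define $\phi_\X$ by the same recipe: for $\B\in\I_\X$, replace each $WX\in\B$ by $vf(W)X$ whenever $WX$ lies in a ``moving'' slice (forced, since $WX\notin\C$), and additionally, for those ``blocked'' slices in $\X_{\bullet,(1)}$ on which \emph{every} $\B'\in\I_\X$ contains the $v$-side set $vf(W)X$, replace $WX$ by $vf(W)X$ as well. Call the collection of such ``always-$v$-side'' pairs $\Y$; so $\phi_\X(\B)=\bigl(\B\cup\{vf(W)X: (W,X)\in\Y\cup(\text{moving slices})\}\bigr)-\{WX:\text{same}\}$, exactly parallel to the formula in Lemma~\ref{lij}. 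Injectivity into $\J_\X$ is then automatic from the bookkeeping once I know $\phi_\X(\B)$ is intersecting, so the whole content is the intersecting check.

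The main obstacle is exactly this intersecting check, and it is genuinely harder here than in Lemma~\ref{lij} because $U$ has size $\geq 2$ and the ``illegal replacement'' phenomenon flagged in \S\ref{UV} must be absorbed. I would argue by contradiction: suppose $A,B\in\D=\phi_\X(\B)$ are disjoint; as $\B$ is intersecting, \Wlog\ $A\notin\B$, so $A=vf(W)X$ came from $WX\in\B$ with $(W,X)$ in $\Y$ or a moving slice; disjointness of $A$ and $B$ forces $v\notin B$, hence $B\in\B$, and moreover $B$ meets $vf(W)X$ but not $WX$, so $B=TY$ with $T\subset U$, $Y\cap X=\emptyset$, $T\cap f(W)=\emptyset$ and $T\cap W\neq\emptyset$ (this last because $B\cap WX\neq\emptyset$ but... wait, $B\in\B\subset\A$ and $B\cap WX=\emptyset$ is what we'd need contradicted; instead $B$ was \emph{not} replaced, so I must show $B$ should have been replaced). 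The key structural fact to exploit is that $W\cap f(T)=f(f(W)\cap T)=\emptyset$, the same identity used in the Proposition, which shows $WX$ and $vf(T)Y$ would be disjoint in $\A$ unless $vf(T)Y\notin\A$ or $T$'s slice is of the ``blocked, always-$v$-side'' type; tracing through, I expect to reach the situation that $B=TY$ lies in a $\X_{\bullet,(1)}\setminus\Y$ slice, so some $\E\in\I_\X$ has $vf(T)Y\in\E$ in place of $TY$, while $(W,X)\in\Y\cup(\text{moving})$ forces $vf(W)X\in\E$ or $WX\notin\E$ appropriately — and then $vf(W)X\cap vf(T)Y$ is nonempty (both contain $v$), so I instead need to play off $WX\in\E$ against $vf(T)Y\in\E$ using the $f$-identity to get a genuine disjoint pair in $\E$, contradicting that $\E$ is intersecting. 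Making the orbit structure of $f$ interact correctly with the $\Y$-selection so that this final contradiction always lands is the delicate point; I would isolate it as a short standalone sublemma about disjointness of slices before assembling $\phi$ from the $\phi_\X$.
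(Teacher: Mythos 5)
Your skeleton is exactly the paper's: slice $\A$ by the trace on $U$ (pairing the $W$-slice with the $vf(W)$-slice), partition $\II(\A)$ and $\II(\C)$ into classes $\I_\X$, $\J_\X$ of constant order, and build each $\phi_\X$ by performing the forced moves together with the moves of a carefully chosen collection $\Y$ of blocked pairs. But your definition of $\Y$ is backwards, and this is not a cosmetic slip: it is precisely the choice on which the whole argument turns. You take $\Y$ to consist of the pairs $X\in\X_{12,(1)}^W$ for which \emph{every} $\B'\in\I_\X$ contains the $v$-side set $vf(W)X$. For such a pair no $\B\in\I_\X$ contains $WX$ at all (each member of $\I_\X$ contains exactly one of the two sets), so your extra replacements are vacuous and $\phi_\X$ degenerates to performing only the forced moves --- which is exactly the naive map that the example $\A=\{13,23,24\}$ in \S\ref{intro} shows cannot work.

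The correct choice is the opposite one: $\Y^W=\{X\in\X_{12,(1)}^W:WX\in\B\hbox{ for all }\B\in\I_\X\}$, the pairs for which every family in the class sits on the \emph{movable} side. This is what makes both crucial steps of the contradiction close: (a) from $TY\in\phi_\X(\B)$ with $Y\in\X_{12,(1)}^T\setminus\Y^T$ one gets that \emph{not} every family contains $TY$, hence some $\E\in\I_\X$ contains $vf(T)Y$; and (b) from $X\in\X_2^W\cup\Y^W$ one gets $WX\in\E$, so $\E$ contains the pair $WX$, $vf(T)Y$, which is disjoint by the involution identity $f(T)\cap W=f(T\cap f(W))=\emptyset$ that you correctly quote. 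With your $\Y$, step (a) yields only that some family contains $TY$, and step (b) yields $vf(W)X\in\E$ rather than $WX\in\E$ --- both useless; indeed your own sketch of the endgame silently switches to the other definition when you assert that $Y\notin\Y$ produces an $\E$ with $vf(T)Y\in\E$. So the one genuinely delicate choice is made wrongly, and the intersecting check --- which you rightly identify as the whole content of the lemma --- is left as an unproved ``sublemma''. Once $\Y^W$ is defined as ``always on the $W$-side'', the contradiction closes exactly as in Lemma~\ref{lij}, and no further interaction with the orbit structure of $f$ is needed beyond that single identity.
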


\begin{proof}
Write $\I=\II(\A)$, $\J=\II(\C)$ and $S=[n]-(U\cup\{v\})$.  For each
$W\subset U$, let
\begin{eqnarray*}
\A_1^W&=&\{X\subset S:WX\not\in\A,vf(W)X\in\A\}\\
\A_2^W&=&\{X\subset S:WX\in\A,vf(W)X\not\in\A\}\\
\A_{12}^W&=&\{X\subset S:WX,vf(W)X\in\A\}.
\end{eqnarray*}
Observe that $\A$ may be written as the disjoint union
$$\A=\bigcup_{W\subset U}\left(\{vf(W)X:X\in\A_1^W\cup\A_{12}^W\}
\cup\{WX:X\in\A_2^W\cup\A_{12}^W\}\right).$$
We make similar definitions and a similar observation for the family $\C$.
For each $W\subset U$ we have $\C_1^W=\A_1^W\cup\A_2^W$, $\C_2^W=\emptyset$
and $\C_{12}^W=\A_{12}^W$.

Suppose 
$\X=(\X_1^W,\X_2^W,\X_{12,(0)}^W,\X_{12,(1)}^W,\X_{12,(2)}^W)_{W\subset U}$
where, for each \hbox{$W\subset U$,} we have $\X_1^W\subset\A_1^W$, 
$\X_2^W\subset\A_2^W$ and $\X_{12,(0)}^W$, $\X_{12,(1)}^W$ and 
$\X_{12,(2)}^W$ forming a disjoint partition of $\A_{12}^W$.
Let $\I_\X\subset\I$ be the collection of intersecting families $\B\subset\A$
satisfying, for each $W\subset U$, the following conditions:
\begin{list}{(\roman{bean})}{\usecounter{bean}}
\item for $X\in\A_1^W$, $vf(W)X\in\B\iff X\in\X_1^W$;
\item for $X\in\A_2^W$, $WX\in\B\iff X\in\X_2^W$;
\item for $X\in\A_{12}^W$:
\begin{itemize}
\item if $X\in\X_{12,(0)}^W$ then $WX$, $vf(W)X\not\in\B$;
\item if $X\in\X_{12,(1)}^W$ then $WX\in\B$ or $vf(W)X\in\B$ but not both;
\item if $X\in\X_{12,(2)}^W$ then $WX$, $vf(W)X\in\B$.
\end{itemize}
\end{list}
Let $\J_\X$ be the collection of intersecting families $\B\subset\C$
satisfying, for each $W\subset U$, conditions (i) and (iii) and the additional
condition
\begin{list}{(ii)'}{\usecounter{bean}}
\item for $X\in\A_2^W$, $vf(W)X\in\B\iff X\in\A_2^W$.
\end{list}
Observe that $\I$ and $\J$ can be written as disjoint unions 
$\I=\bigcup_\X\I_\X$ and $\J=\bigcup_\X\J_\X$, and that, for each $\X$,
there is a positive integer $m$ such that $|\B|=m$ for all 
$B\in\I_\X\cup\J_X$.  Hence, as before, it suffices to construct, for each
$\X$, an injection $\phi_\X\colon\I_\X\to\J_\X$.

So fix $\X$.  For each $W\subset U$ let
$$\Y^W=\{X\in\X_{12,(1)}^W:WX\in\B\hbox{ for all }\B\in\I_\X\}.$$
Define $\phi_\X\colon\I_\X\to\J_\X$ by
$$\phi_\X(\B)=\B\cup\bigcup_{W\subset U}\{vf(W)X:X\in\X_2^W\cup\Y^W\}
-\bigcup_{W\subset U}\{WX:X\in\X_2^W\cup\Y^W\}.$$
Again, all that we need to check is that $\phi_\X(\B)$ is intersecting
for each $\B\in\I_\X$.

Assume for a contradiction that $\B\in\I_\X$ but that $\D=\phi_\X(\B)$
is not intersecting.  So there are sets $A$, $B\in\D$ with 
$A\cap B=\emptyset$.  As $\B$ is intersecting, we cannot have both $A$,
$B\in\B$ so assume \Wlog\ $A\not\in\B$.  Then $A=vf(W)X$ for some
$W\subset U$, $X\in\X_2^W\cup\Y^W$ and $WX\in\B$. 
Now, we must have $B\in\B$ (as otherwise we would have
$v\in B$ and so $A\cap B\ne\emptyset$).  
So $B\cap WX\ne\emptyset$ but $B\cap vf(W)X=\emptyset$.
Hence $B=TY$ for some $T\subset U$ and $Y\subset[n]-(U\cup\{v\})$
with $T\cap f(W)=\emptyset$ and $X\cap Y=\emptyset$.

It is easy to check that $vf(T)\cap W=\emptyset$.  Indeed, suppose instead
that there is some $a\in vf(T)\cap W$.  As $v\not\in W$ we must have $a\ne v$
and so $a=f(t)$ for some $t\in T$.  But then $t=f(a)\in f(W)$,
contradicting $T\cap f(W)=\emptyset$.

Now, we have $vf(T)\cap W=\emptyset$ and $X\cap Y=\emptyset$, so
$vf(T)Y\cap WX=\emptyset$.  But $WX\in\B$ so $vf(T)Y\not\in\B$.  Now,
$vf(T)Y\not\in\B$ but $TY\in\B$ so $Y\in\X_2^T\cup\X_{12,(1)}^T$.  But
$TY=B\in\D$ so $Y\not\in\X_2^T\cup\Y^T$.  Hence $Y\in\X_{12,(1)}^T-\Y^T$;
that is, there is some $\E\in\I_\X$ with $vf(T)Y\in\E$.

But $vf(W)X=A\in\D$ and $vf(W)X\not\in\B$ so $X\in\X_2^W\cup\Y^W$.  Hence
$WX\in\E$.  But now $vf(T)Y$, $WX\in\E$ with $vf(T)Y\cap WX=\emptyset$,
a contradiction.
\end{proof}

We now obtain our main result.

\vfill
\eject

\setcounter{theorem}{0}
\begin{theorem}
Let $n$ be a positive integer and $p\in(0,1)$.  Let $r$ be a positive
integer with $r\le n/2$.  Then, over all $\A\subset\pow[n]$
with $|\A|=\size$, the probability $\prob(\ap\hbox{\rm\ is intersecting})$
is maximized by $\A=\nr$.

Moreover, suppose $i$ is any positive integer with $i\lee 2^{n-1}$ and let
$r$ be such that 
$\Size\lee2^{n-1}+i\lee\size$.  Then, over all $\A\subset\pow[n]$ with
$|A|=2^{n-1}+i$, the probability $\prob(\ap\hbox{\rm\ is intersecting})$
is maximized by some $\A$ with $\nrr\subset\A\subset\nr$.
\end{theorem}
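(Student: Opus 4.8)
The plan is to deduce Theorem~\ref{tnon} from Lemma~\ref{luvf} by compressing an arbitrary family down to a canonical one. Write $I_k(\A)=|\{\B\in\II(\A):|\B|=k\}|$, so that $\prob(\ap\hbox{ is intersecting})=\sum_k I_k(\A)p^k(1-p)^{|\A|-k}$. A $(U,v,f)$-compression preserves $|\A|$ and, by Lemma~\ref{luvf}, does not decrease any $I_k(\A)$; since $0<p<1$ each coefficient $p^k(1-p)^{|\A|-k}$ is positive, so the compression does not decrease $\prob(\ap\hbox{ is intersecting})$. Moreover, every \emph{non-trivial} $(U,v,f)$-compression strictly increases $\sum_{A\in\A}|A|$ (a set that moves gains the element $v$, while $|f(A\cap U)|=|A\cap U|$), and this quantity is bounded. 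Hence from any $\A$ a finite sequence of $(U,v,f)$-compressions reaches a family $\A^*$ with $|\A^*|=|\A|$, with $\prob(\A^*_p\hbox{ is intersecting})\gee\prob(\ap\hbox{ is intersecting})$, and with $\C_{U,v,f}\A^*=\A^*$ for every $U$, $v$, $f$ of the permitted form. Call such a family \emph{stable}.

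Everything then reduces to a structural fact: \emph{every stable $\A^*\subset\pow[n]$ satisfies $[n]^{(\gee r+1)}\subset\A^*\subset[n]^{(\gee r)}$ for some integer $r$.} I would prove this in two steps. First, $\A^*$ is an up-set: given $A\in\A^*$ and $c\notin A$, if $|A|\gee 2$ take $U\subset A$ with $|U|=2$ and any pairing $f$ of $U$; then $C_{U,v,f}(A)=A\cup\{c\}$ (with $v=c$), so $A\cup\{c\}\in\A^*$ by stability. If $|A|\lee 1$ and $n\gee 4$, then $|A\cup\{c\}|\lee n-2$, so argue the same way with $U$ a two-element set disjoint from $A\cup\{c\}$ (the cases $n\lee 3$ of the theorem are small and are checked directly). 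Secondly, assuming $\A^*\ne\pow[n]$, let $r$ be the largest size with $[n]^{(r)}\not\subset\A^*$; then $[n]^{(\gee r+1)}\subset\A^*$ by maximality, and it remains to show $\A^*$ has no set of size smaller than $r$. If it did, then, being an up-set, $\A^*$ would contain some $A_0$ with $|A_0|=r-1$. Fix an arbitrary $M\in[n]^{(r)}$, pick $v\in M\setminus A_0$, and set $U=(A_0\triangle M)\setminus\{v\}$, which has even order since $|A_0\triangle M|=2|M|-1-2|A_0\cap M|$ is odd. The sets $A_0\setminus M$ and $(M\setminus A_0)\setminus\{v\}$ are disjoint, of equal size, and partition $U$, so there is a pairing $f$ of $U$ with $f(A_0\setminus M)=(M\setminus A_0)\setminus\{v\}$; a direct check then gives $C_{U,v,f}(A_0)=M$, whence $M\in\A^*$ by stability. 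As $M$ was arbitrary, $[n]^{(r)}\subset\A^*$, contradicting the choice of $r$.

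Granting the structural fact, the theorem is bookkeeping. For the first assertion, compress an arbitrary $\A$ with $|\A|=\size$ to a stable $\A^*$ of the same order; then $[n]^{(\gee r'+1)}\subset\A^*\subset[n]^{(\gee r')}$ for some $r'$, and since $|[n]^{(\gee s)}|$ is strictly decreasing in $s$ with value $\size$ at $s=r$, the bounds $|[n]^{(\gee r'+1)}|\lee|\A^*|\lee|[n]^{(\gee r')}|$ force $r'\in\{r-1,r\}$. If $r'=r$ then $\A^*=\nr$ (equal orders); if $r'=r-1$ then $\nr\subset\A^*$ with $|\A^*|=|\nr|$, so again $\A^*=\nr$. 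Hence $\prob(\ap\hbox{ is intersecting})\lee\prob((\nr)_p\hbox{ is intersecting})$. For the ``moreover'' part, let $\A$ maximize $\prob(\ap\hbox{ is intersecting})$ among families of order $2^{n-1}+i$ and compress it to a stable $\A^*$; maximality forces equality of the two probabilities, so $\A^*$ is again a maximizer, and $[n]^{(\gee r'+1)}\subset\A^*\subset[n]^{(\gee r')}$ for some $r'$. The hypothesis $\Size\lee 2^{n-1}+i\lee\size$, together with $|[n]^{(\gee r'+1)}|\lee|\A^*|\lee|[n]^{(\gee r')}|$, forces $r'\in\{r-1,r,r+1\}$; when $r'=r$ this is already $\nrr\subset\A^*\subset\nr$, and in the two extreme cases a comparison of orders (using $|\A^*|=2^{n-1}+i$) gives $\A^*=\nr$ or $\A^*=\nrr$ respectively, so $\nrr\subset\A^*\subset\nr$ in every case. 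Small values of $n$ ($n\lee 3$) are handled directly.

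I expect the second step of the structural fact — that a stable up-set cannot omit a whole level, proved by exhibiting a single $(U,v,f)$-compression carrying a given $(r-1)$-set onto a given $r$-set — to be the main obstacle. Everything else (the probability identity, the termination argument, the up-set observation, and the comparisons of orders) is routine, though the verification that the displayed $(U,v,f)$ genuinely sends $A_0$ to $M$ must be done with care.
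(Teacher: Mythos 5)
Your proposal is correct and follows essentially the same route as the paper: deduce the theorem from Lemma~\ref{luvf} by compressing to a family fixed under all $(U,v,f)$-compressions, using the monotonicity of $\sum_{A\in\A}|A|$ for termination and then identifying the fixed families as those sandwiched between consecutive $\nr$'s. The only difference is that you spell out, via explicit choices of $U$, $v$ and $f$, the structural step that the paper dismisses as ``easy to see'' (phrased there in terms of $WV$-compressions with $|W|=|V|+1$), and your verification of that step is sound.
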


\begin{proof}
It clearly suffices to prove the second statement as the first follows 
immediately.
Starting from any family $\A$, we observe that 
it is possible to obtain a family $\C$ with
$\nrr\subset\C\subset\nr$ for some $r$ by a sequence of $(U,v,f)$-compressions.
Indeed, suppose that $\A$ is not already of the required form.  Then is
is easy to see that there are some disjoint sets $W$, $V\in[n]$ with
$|W|=|V|+1$ and $\A$ not $WV$-compressed.  Take $v$=$\min W$,
$U=(W-\{v\})\cup V$ and $f\colon U\to U$ a pairing function with
$f(W-\{v\})=V$.  Then $\A$ is not $(U,v,f)$-compressed so we may apply
$\C_{U,v,f}$ to obtain a new family.  But every time we apply a 
non-trivial $(U,v,f)$-compression the quantity $\sum_{A\in\A}|A|$ increases
and so this process must terminate with some $\A$ of the required form.
Hence Theorem~\ref{tnon} follows from Lemma~\ref{luvf} precisely as
Theorem~\ref{tuni} follows from Lemma~\ref{lij}.
\end{proof}

Examining the proof of Lemma~\ref{luvf}, we note that $\C_{U,v,f}\A$ has
at least as many intersecting subfamilies of each possible order as does
$\A$.  Hence our optimal families simultaneously
maximize the number of intersecting subfamilies of every possible order.
This may be of independent interest.

\begin{corollary}\label{sim}
Let $\A\subset\pow[n]$ with
$|A|=\sum_{j=r}^n{n\choose r}$.  
Then the family $\nr$ has
at least as many intersecting subfamilies of every possible order 
as has $\A$.
\end{corollary}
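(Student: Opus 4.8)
The plan is to read the statement off almost directly from Lemma~\ref{luvf} and the compression scheme used to prove Theorem~\ref{tnon}; I do not expect a genuine obstacle here, the only point requiring a moment's care being the identification of the terminal family at the very end. The key observation is that the number of intersecting subfamilies of $\A$ of a given order $m$ is, by definition, $|\{\B\in\II(\A):|\B|=m\}|$, and the injection $\phi\colon\II(\A)\to\II(\C_{U,v,f}\A)$ produced by Lemma~\ref{luvf} satisfies $|\phi(\B)|=|\B|$. Hence, for each $m$, $\phi$ restricts to an injection from the order-$m$ members of $\II(\A)$ into the order-$m$ members of $\II(\C_{U,v,f}\A)$, so a single $(U,v,f)$-compression never decreases the number of intersecting subfamilies of any order. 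Composing these injections, the same holds after any finite sequence of $(U,v,f)$-compressions.

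Next I would invoke the termination argument from the proof of Theorem~\ref{tnon}: starting from $\A$ and repeatedly applying nontrivial $(U,v,f)$-compressions, the quantity $\sum_{A\in\A}|A|$ strictly increases, so after finitely many steps we reach a family $\C$ admitting no further nontrivial $(U,v,f)$-compression, and, as shown there, such a $\C$ satisfies $[n]^{(\gee s+1)}\subset\C\subset[n]^{(\gee s)}$ for some $s$. By the previous paragraph, $\C$ has at least as many intersecting subfamilies of each order as $\A$ does.

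Finally I would pin down $\C$ by cardinality. As $(U,v,f)$-compressions preserve $|\A|$, we have $|\C|=|\A|=\size$; since $j\mapsto|[n]^{(\gee j)}|$ is strictly decreasing, the inequalities $|[n]^{(\gee s+1)}|\lee|\C|\lee|[n]^{(\gee s)}|$ together with $|\C|=\size$ force $s\in\{r-1,r\}$, and then the inclusions $[n]^{(\gee s+1)}\subset\C\subset[n]^{(\gee s)}$ with $|\C|=|\nr|$ give $\C=\nr$ in both cases (when $s=r$ since $\C\subset\nr$, and when $s=r-1$ since $\C\supset[n]^{(\gee r)}=\nr$). Thus $\nr$ has at least as many intersecting subfamilies of every order as $\A$, as required. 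All the substance sits in Lemma~\ref{luvf}; this last cardinality bookkeeping is the only thing one could reasonably call the hard part, and even it is routine.
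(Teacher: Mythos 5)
Your proposal is correct and follows exactly the route the paper intends: the order-preserving injections of Lemma~\ref{luvf} composed along the compression sequence from the proof of Theorem~\ref{tnon}, with the terminal family identified as $\nr$ by the sandwiching $[n]^{(\gee s+1)}\subset\C\subset[n]^{(\gee s)}$ together with $|\C|=|\A|$. The cardinality bookkeeping you spell out is the only detail the paper leaves implicit, and you have it right.
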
 

In fact, Theorem~\ref{tnon} also solves Problem~\ref{pnon} in the cases.
where \hbox{$2^{n-1}+i=\big(\sum_{j=r}^n{n\choose r}\big)\pm1$.}  
Moreover, Lemma~\ref{lij}
holds for $\A\subset\pow[n]$ as well as for $\A\subset\ner$ with an identical
proof, allowing us to solve Problem~\ref{pnon} in the cases where
$2^{n-1}+i=\big(\sum_{j=r}^n{n\choose r}\big)\pm2$.  
For completeness, we state these
results explicitly.

\begin{corollary}\label{next}
Let $n$ and $r$ be positive integers with $r\le n/2$.
\begin{itemize}
\item
Over all $\A\subset\pow[n]$ with
$|\A|=\big(\sum_{j=r}^n{n\choose r}\big)+1$, 
the probability $\prob(\ap\hbox{\rm\ is intersecting})$
is maximized by $$\A=\nr\cup\{123\ldots r-1\}.$$

\item Over all $\A\subset\pow[n]$ with 
$|\A|=\big(\sum_{j=r}^n{n\choose r}\big)-1$.
the probability $\prob(\ap\hbox{\rm\ is intersecting})$ is maximized
by $$\A=\nr-\{(n-r+1)(n-r+2)\ldots n\}.$$

\item
Over all $\A\subset\pow[n]$ with
$|\A|=\big(\sum_{j=r}^n{n\choose r}\big)+2$, 
the probability $\prob(\ap\hbox{\rm\ is intersecting})$
is maximized by $$\A=\nr\cup\{123\ldots r-1,123\ldots (r-2)r\}.$$

\item Over all $\A\subset\pow[n]$ with 
$|\A|=\big(\sum_{j=r}^n{n\choose r}\big)-2$,
the probability $\prob(\ap\hbox{\rm\ is intersecting})$
is maximized by $$\A=\nr-\{(n-r)(n-r+2)(n-r+3)\ldots n, 
(n-r+1)(n-r+2)\ldots n\}.$$
\end{itemize}
Moreover, in each case the optimal family simultaneously
maximizes the number of intersecting subfamilies of every possible order.
\end{corollary}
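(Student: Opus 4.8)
The plan is to reduce all four cases, by compressions, to a tightly constrained family and then pick out the best one of that shape. Write $N_s=\sum_{j=s}^n\binom nj=\big|[n]^{(\gee s)}\big|$; with $1\lee r\lee n/2$ one always has $\binom nr\gee2$, and also $\binom n{r-1}\gee2$ unless $r=1$, in which case the first bullet is the tautology $\pow[n]=[n]^{(\gee1)}\cup\{\emptyset\}$ and the third is vacuous. The starting point is the construction from the proof of Theorem~\ref{tnon} (see also Corollary~\ref{sim}): from any $\A\subset\pow[n]$ one may apply a sequence of $(U,v,f)$-compressions, each non-trivial one strictly increasing $\sum_{A\in\A}|A|$ (so the process terminates) and, by Lemma~\ref{luvf}, never decreasing the number of intersecting subfamilies of any given order, until one reaches a family $\C$ with $[n]^{(\gee s+1)}\subset\C\subset[n]^{(\gee s)}$, where $s$ is the index with $N_{s+1}\lee|\C|\lee N_s$; this $s$ is $r$ in the two ``$-$'' bullets and $r-1$ in the two ``$+$'' bullets. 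Comparing cardinalities then pins $\C$ down: it is $\nr$ with one (resp.\ two) set(s) of size $r$ removed in the ``$-$'' cases, or $\nr$ with one (resp.\ two) set(s) of size $r-1$ adjoined in the ``$+$'' cases, the two sets being distinct in the ``two'' cases.

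For the first two bullets this already finishes the job. The family $\nr$ is invariant under every permutation of $[n]$, so any two families $\nr\setminus\{S\}$ with $|S|=r$ are related by a permutation, as are any two families $\nr\cup\{S\}$ with $|S|=r-1$; hence within each of these two collections every member has exactly the same number of intersecting subfamilies of each order. Together with the reduction above, this shows that $\nr\setminus\{(n-r+1)(n-r+2)\ldots n\}$ and $\nr\cup\{123\ldots(r-1)\}$ each simultaneously maximise the number of intersecting subfamilies of every order, and in particular $\prob(\ap\hbox{ is intersecting})$, which is the content of the first two bullets.

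For the last two bullets I would use that Lemma~\ref{lij} holds verbatim for families $\A\subset\pow[n]$, its proof using no uniformity. Since $\nr$ is itself left-compressed, a short check shows that for $i<j$ one has $\C_{ij}(\nr\cup\mathcal F)=\nr\cup\C_{ij}\mathcal F$ whenever $\mathcal F$ consists of $(r-1)$-sets, and, dually, $\C_{ij}(\nr\setminus\mathcal F)=\nr\setminus\mathcal F'$ whenever $\mathcal F$ consists of $r$-sets, where $\mathcal F'$ is the image of $\mathcal F$ under the ``upward'' compression that replaces an element $i$ of a set by $j$ wherever possible (and so moves the sets of $\mathcal F$ towards the top), with the appropriate blocking inside $\mathcal F$. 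Starting from the family $\C$ of the first paragraph and repeatedly applying $ij$-compressions with $i<j$---which never decreases an order count (Lemma~\ref{lij}) and, one checks, strictly decreases, resp.\ increases, $\sum_{a\in S_1}a+\sum_{a\in S_2}a$ until it stabilises---one ends with $\nr\cup\{T_1,T_2\}$ where $\{T_1,T_2\}$ is left-compressed, resp.\ $\nr\setminus\{T_1,T_2\}$ where $\{T_1,T_2\}$ is closed under the upward compression. It then remains to verify the elementary classification: the unique left-compressed $2$-element family of $(r-1)$-subsets of $[n]$ is $\{123\ldots(r-1),\,123\ldots(r-2)r\}$, and the unique upward-closed $2$-element family of $r$-subsets of $[n]$ is $\{(n-r+1)(n-r+2)\ldots n,\,(n-r)(n-r+2)(n-r+3)\ldots n\}$. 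Chaining this with the first paragraph yields the last two bullets, and the ``simultaneously maximises'' statement comes along for free, since every step only increased order counts.

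The one genuinely delicate point is that final classification. To prove it, one shows first that a left-compressed (resp.\ upward-closed) $2$-element family must contain the extreme set $123\ldots(r-1)$ (resp.\ $(n-r+1)\ldots n$)---if not, a weight-monotonicity argument produces two sets each forced to be the compression of the other, which is impossible---and the compression constraints then force the second set uniquely. This is exactly where the restriction to at most two extra or missing sets is used; for three sets the analogous classification already fails, which is why the corollary stops here.
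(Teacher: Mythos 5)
Your proposal is correct and follows essentially the route the paper intends: reduce by the $(U,v,f)$-compressions of Theorem~\ref{tnon} to a family sandwiched between consecutive layers, dispose of the $\pm1$ cases by the symmetry of $\nr$, and handle the $\pm2$ cases via the $\pow[n]$ version of Lemma~\ref{lij} together with the classification of the unique left-compressed (resp.\ up-compressed) pair. You supply the details (notably that classification and the check that $ij$-compression acts independently on the added sets, resp.\ the holes) that the paper leaves implicit.
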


\end{subsection}

\end{section}

\begin{section}{Concluding remarks}\label{end}
Theorem~\ref{tnon} gives us substantial information about the structure
of the optimal families solving Problem~\ref{pnon}:  they consist of the top
layers of the cube together with some collection of sets from the next layer
down.  However, we know little about what happens within the layers.  The
analogue of Lemma~\ref{lij} for families in $\pow[n]$ gives that we may
take our optimal family to be left-compressed, but this still leaves open
many possibilities.  In particular, we would be interested to know if there
is indeed a nested sequence of optimal families as conjectured by Katona,
Katona and Katona~\cite{kkk}.

We observed in Corollaries~\ref{sim} and~\ref{next} that in all the cases
of Problem~\ref{pnon} that we could solve, the optimal family simultaneously
maximized the number of intersecting subfamilies of every given order.  We
would like to know if this is always possible.

\begin{question}
Let $N\lee 2^n$.  Does there exist a family $\A\subset\pow[n]$ of order $N$
which simultaneously
maximizes the number of intersecting subfamilies of every possible order?
\end{question}

Finally, we noted in \S\ref{UV} that we were unable to prove that 
$UV$-compressions (applied in appropriate order) always increased the number
of intersecting subfamilies of each order.  However, we also have no
counterexample.  Hence we ask:

\begin{question}
Let $\A\subset\pow[n]$ and $U$, $V\subset[n]$ be disjoint with $|U|>|V|$.
Suppose $\A$ is $U'V'$-compressed for all $U'\subset U$ and $V'\subset V$
with $(U',V')\ne(U,V)$ and $|U'|>|V'|$.  Must $\C_{UV}\A$ have at least
as many intersecting subfamilies of every possible order as has $\A$?
\end{question}
\end{section}

\end{document}